\newcommand{\R}{{\mathbb R}}
\newcommand{\C}{{\mathbb C}}
\newcommand{\T}{{\mathbb T}}
\newcommand{\Z}{{\mathbb Z}}
\newcommand{\D}{{\mathbb D}}
\newcommand{\U}{{\mathbb U}}
\newtheorem{theorem}{Theorem}
\begin{document}

\title[Uniformly Positive Lyapunov Exponents]{Almost Periodic Szeg\H{o} Cocycles
with Uniformly Positive Lyapunov Exponents}

\author[D.\ Damanik]{David Damanik}

\address{Department of Mathematics, Rice University, Houston, TX~77005, USA}

\email{damanik@rice.edu}

\urladdr{\href{http://www.ruf.rice.edu/~dtd3/}{http://www.ruf.rice.edu/~dtd3/}}

\author[H.\ Kr\"uger]{Helge Kr\"uger}

\address{Department of Mathematics, Rice University, Houston, TX~77005, USA}

\email{helge.krueger@rice.edu}

\urladdr{\href{http://math.rice.edu/~hk7/}{http://math.rice.edu/~hk7/}}

\thanks{D.\ D.\ was supported in part by NSF grant
DMS--0653720.}

\date{\today}

\begin{abstract}
We exhibit examples of almost periodic Verblunsky coefficients for
which Herman's subharmonicity argument applies and yields that the
associated Lyapunov exponents are uniformly bounded away from
zero.
\end{abstract}

\maketitle

\section{Introduction}

Suppose that $(\Omega,\mu)$ is a probability measure space and $T
: \Omega \to \Omega$ is ergodic with respect to $\mu$. A
measurable map $A : \Omega \to \mathrm{GL} (2,\C)$ gives rise to a
so-called cocycle, which is a map from $\Omega \times \C^2$ to
itself given by $(\omega,v) \mapsto (T \omega,A(\omega)v)$. This
map is usually denoted by the same symbol. When studying the
iterates of the cocycle, the following matrices describe the
dynamics of the second component:
$$
A_n(\omega) = A(T^{n-1} \omega) \cdots A(\omega ).
$$

Assuming $\log \| A \| \in L^1(\mu)$ and
$$
\inf_{n \ge 1} \frac{1}{n} \int_\Omega \log \| A_n(\omega)\| \,
d\mu(\omega) > - \infty,
$$
then, by Kingman's subadditive ergodic theorem, the following
limit exists,
\begin{equation}\label{MultET}
\gamma = \lim_{n \to \infty} \frac{1}{n} \int_\Omega \log \|
A_n(\omega)\| \, d\mu(\omega),
\end{equation}
and we have
$$
\gamma = \lim_{n \to \infty} \frac{1}{n}  \log \| A_n(\omega)\|
$$
for $\mu$-almost every $\omega \in \Omega$. The number $\gamma$ is
called the Lyapunov exponent of $A$.

We will be interested in the particular case of Szeg\H{o}
cocycles, which arise as follows. Denote the open unit disk in
$\C$ by $\D$. For a measurable function $f : \Omega \to \D$ with
$$
\int_\Omega \log (1 - |f(\omega)|) \, d\mu(\omega) > - \infty
$$
and $z \in \partial \D$, the cocycle $A^z : \Omega \to \U (1,1)$
is given by
\begin{equation}\label{sc}
A^z(\omega) = (1 - |f(\omega)|^2)^{-1/2} \left( \begin{array}{cc}
z & - \overline{f(\omega)} \\ - f(\omega) z & 1 \end{array}
\right).
\end{equation}
The Lyapunov exponent of $A^z$ will be denoted by $\gamma(z)$. The
complex numbers $\alpha_n(\omega) = f(T^n \omega)$, $n \ge 0$,
appearing in $A^z(T^n \omega)$ are called Verblunsky coefficients.

Szeg\H{o} cocycles play a central role in the analysis of
orthogonal polynomials on the unit circle with ergodic Verblunsky
coefficients; compare Simon's recent monograph \cite{S,S2} (see in
particular \cite[Section~10.5]{S2} for more information on
Lyapunov exponents of Szeg\H{o} cocycles). One of the major themes
of \cite{S,S2} is to work out in detail the close analogy between
the spectral analysis of Jacobi matrices, or more specifically
discrete one-dimensional Schr\"odinger operators, and that of CMV
matrices. Indeed, a large portion of the second part, \cite{S2},
is devoted to carrying over results and methods from the
Schr\"odinger and Jacobi setting to the OPUC setting.

Sometimes the transition is straightforward and sometimes it is
not. As discussed in the remarks and historical notes at the end
of \cite[Section~10.16]{S2}, one of the results that Simon did not
manage to carry over is Herman's result on uniformly positive
Lyapunov exponents for a certain class of almost periodic
Schr\"odinger cocycles \cite{H} (see also
\cite[Section~10.2]{CFKS}), which is proved by a beautiful
subharmonicity argument and which is at the heart of many of the
recent, far more technical, advances in the area of Schr\"odinger
operators with almost periodic potentials; see Bourgain's book
\cite{B} and references therein.

Our goal in this note is to present one-parameter families of
almost periodic Szeg\H{o} cocycles for which we prove uniformly
positive Lyapunov exponents using Herman's argument for an
explicit region of parameter values.

\section{Examples with Uniformly Positive Lyapunov Exponents}

Consider the $1$-torus $\T = \R / \Z$ equipped with Lebesgue measure
and $\Z_2$ equipped with the probability measure that assigns the
weight $\frac12$ to each of $0$ and $1$. Let $\Omega = \T \times
\Z_2$ be the product space and $\mu$ the product measure. Fix some
irrational $\alpha \in \T$. The transformation $T : \Omega \to
\Omega$ is given by $T(\theta,j) = (\theta + \alpha , j + 1)$. It is
readily verified that $T$ is ergodic with respect to $\mu$.

For $\varepsilon \in (0,1)$ and $k \in \Z \setminus \{ 0 \}$, we
define $f : \Omega \to \D$ by
\begin{equation}\label{fdef}
f(\theta,j) = \begin{cases} (1 - \varepsilon^2)^{1/2} e^{2 \pi i k
\theta} & j = 0, \\ (1 - \varepsilon^2)^{1/2} e^{- 2 \pi i k \theta}
& j = 1. \end{cases}
\end{equation}
Clearly, $f$ is measurable function from $\Omega$ to $\D$ and
satisfies $\log [(1 - |f|^2)^{-1/2}] \in L^1(\mu)$. Thus, the
Lyapunov exponents $\gamma(z)$, $z \in \partial \D$ exist and we
wish to bound them from below.

\begin{theorem}\label{t.1}
For $(\Omega,\mu,T)$ as above and $f$ given by \eqref{fdef}, we
have the estimate
$$
\inf_{z \in \partial \D} \gamma(z) \ge \log \frac{(1 -
\varepsilon^2)^{\frac{1}{2}}}{\varepsilon}.
$$
In particular, if $\varepsilon \in (0,\frac{1}{\sqrt{2}})$, the
Lyapunov exponent $\gamma (\cdot)$ is uniformly positive on
$\partial \D$.
\end{theorem}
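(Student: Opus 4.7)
The plan is to run Herman's subharmonicity argument after absorbing the $\Z_2$ factor into a one-frequency cocycle on $\T$. I would define $B(\theta) := A^z(\theta+\alpha,1)\,A^z(\theta,0)$, so that for $\omega=(\theta,0)$ one has $A_{2n}(\omega) = B(\theta+2(n-1)\alpha)\cdots B(\theta)$, the $n$-step cocycle over the irrational rotation $\theta\mapsto\theta+2\alpha$ on $\T$. Ergodicity of both $T$ and this rotation then gives $\gamma(z) = \tfrac12\gamma_B$, where $\gamma_B$ is the Lyapunov exponent of $B$, reducing the problem to proving $\gamma_B \ge \log\frac{1-\varepsilon^2}{\varepsilon^2}$.

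The crucial structural point is that $|f|^2 \equiv 1-\varepsilon^2$ is constant, so the $(1-|f|^2)^{-1/2}$ prefactor is the constant $1/\varepsilon$ and $B(\theta)$ extends to an entire $\mathrm{GL}(2,\C)$-valued function of $\theta\in\C$. With $w := e^{2\pi ik\theta}$, $c := e^{2\pi ik\alpha}$, and $\beta := \sqrt{1-\varepsilon^2}$, an explicit multiplication gives
$$
B(\theta) \;=\; \frac{1}{\varepsilon^2}\begin{pmatrix} z^2 + \beta^2 c z w^2 & -\beta(z w^{-1} + c w) \\ -\beta(c^{-1}z^2 w^{-1} + z w) & 1 + \beta^2 c^{-1} z w^{-2} \end{pmatrix}.
$$
Assume $k>0$ (the case $k<0$ is handled symmetrically with $t\to-\infty$ below). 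Then $\tilde B(\theta) := w^2 B(\theta)$ is a polynomial in $w$, and as $t := \mathrm{Im}\,\theta \to+\infty$, $w\to 0$ and
$$
\tilde B(\theta) \;\longrightarrow\; \tilde B_\infty \;:=\; \frac{\beta^2 c^{-1} z}{\varepsilon^2}\begin{pmatrix} 0 & 0 \\ 0 & 1\end{pmatrix}.
$$
Pulling the scalar factors out of the product,
$$
B_n(\theta+it) \;=\; \Bigl(\prod_{j=0}^{n-1} w(\theta+2j\alpha+it)^{-2}\Bigr) \prod_{j=n-1}^{0} \tilde B(\theta+2j\alpha+it),
$$
the prefactor has modulus $e^{4\pi knt}$ independent of $\theta$, and the matrix factor converges uniformly in $\theta\in\R$ to $\tilde B_\infty^{\,n}$. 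Since $\mathrm{diag}(0,1)$ is idempotent, $\|\tilde B_\infty^{\,n}\| = (\beta/\varepsilon)^{2n}$, yielding
$$
\log\|B_n(\theta+it)\| - 4\pi knt \;\longrightarrow\; 2n\log(\beta/\varepsilon) \qquad (t\to+\infty),
$$
uniformly in $\theta\in[0,1]$.

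The Herman step is now standard. The function $\theta\mapsto \log\|B_n(\theta)\|$ is subharmonic on $\C$ and $1$-periodic in $\mathrm{Re}\,\theta$, so the fiber mean $M_n(t) := \int_0^1 \log\|B_n(\theta+it)\|\,d\theta$ is a convex function of $t$: after mollification one computes $M_n''(t) = \int_0^1 \Delta \log\|B_n\|(\theta+it)\,d\theta\ge 0$, the $\partial_\theta^2$ contribution vanishing by periodicity. Convexity together with the asymptotic $M_n(t)/t \to 4\pi kn$ forces $M_n'(t) \le 4\pi kn$ for every $t$, hence $M_n(0) \ge M_n(t) - 4\pi knt$, and letting $t\to+\infty$ gives $M_n(0) \ge 2n\log(\beta/\varepsilon)$. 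Dividing by $n$ and invoking \eqref{MultET} yields $\gamma_B \ge 2\log(\beta/\varepsilon)$, so $\gamma(z) \ge \log\frac{(1-\varepsilon^2)^{1/2}}{\varepsilon}$ uniformly in $z\in\partial\D$; this bound is positive precisely for $\varepsilon < 1/\sqrt 2$.

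The main technical point I anticipate is the convexity of $M_n$ in the (possibly non-smooth) subharmonic setting, which is handled by the standard mollification argument. The remaining ingredients — the two-step reduction, the entire extension of $B(\theta)$, and the uniform convergence of the $n$-fold product to $\tilde B_\infty^{\,n}$ for fixed $n$ — are direct consequences of $|f|$ being constant and the explicit polynomial-in-$w$ form of $\tilde B$.
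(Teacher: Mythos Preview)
Your argument is correct and reaches the same bound, but the route differs from the paper's in two respects worth noting. First, you eliminate the $\Z_2$ factor by grouping two steps into the single-frequency cocycle $B(\theta)=A^z(\theta+\alpha,1)A^z(\theta,0)$ over $\theta\mapsto\theta+2\alpha$; the paper instead keeps the one-step cocycle but conjugates by explicit unitaries $C^z(\theta,j)$ (a swap matrix for $j=0$, a diagonal $z^{\pm 1/2}$ for $j=1$) so that, after pulling out a unimodular factor $w^k$, each one-step matrix is already a polynomial in $w=e^{2\pi i\theta}$ whose value at $w=0$ is $\mathrm{diag}(0,-(1-\varepsilon^2)^{1/2}e^{-2\pi ikm\alpha})$. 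Second, you package Herman's step as convexity of the fiber mean $t\mapsto\int_0^1\log\|B_n(\theta+it)\|\,d\theta$ together with the asymptotic as $t\to+\infty$, whereas the paper uses the equivalent ``mean over $|w|=1$ is at least the value at $w=0$'' directly. Your version requires computing $B$ explicitly and running a limit/convexity argument; the paper's conjugation avoids the explicit two-step product and gives the bound in one line once the polynomial form is displayed. Either way the constant comes from the same source: the surviving $(2,2)$ entry $(1-\varepsilon^2)^{1/2}$ (squared, in your two-step version) at $w=0$, against the prefactor $\varepsilon^{-1}$ per step.
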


\begin{proof}
We consider the case $k > 0$; the case $k < 0$ is completely
analogous. Fix any $z \in \partial \D$. By the definition \eqref{sc}
of $A^z$ and the definition \eqref{fdef} of $f$, we have
$$
A^z(\theta,j) = \begin{cases} \varepsilon^{-1} \begin{pmatrix} z &
- (1 - \varepsilon^2)^{1/2} e^{- 2 \pi i k \theta} \\ - (1 -
\varepsilon^2)^{1/2} e^{2 \pi i k \theta} z & 1
\end{pmatrix} & j = 0, \\ \varepsilon^{-1} \begin{pmatrix} z & - (1 - \varepsilon^2)^{1/2}
e^{2 \pi i k \theta} \\ - (1 - \varepsilon^2)^{1/2} e^{- 2 \pi i k
\theta} z & 1 \end{pmatrix} & j = 1.
\end{cases}
$$
Let us conjugate these matrices as follows
(cf.~\cite[Equation~(4.10)]{GHMT}). Define
$$
C^z(\theta,j) = \begin{cases} \begin{pmatrix} 0 & 1 \\ 1 & 0 \end{pmatrix} & j = 0, \\
\begin{pmatrix} z^{1/2} & 0 \\ 0 & z^{-1/2} \end{pmatrix} & j = 1.
\end{cases}
$$
For $j = 0$, we have
\begin{align*}
\varepsilon C^z(\theta,j) & A^z(\theta,j) C^z(\theta,j - 1)^{-1} = \\
& = \begin{pmatrix} 0 & 1 \\ 1 & 0 \end{pmatrix} \begin{pmatrix} z
& - (1 - \varepsilon^2)^{1/2} e^{- 2 \pi i k \theta} \\ - (1 -
\varepsilon^2)^{1/2} e^{2 \pi i k \theta} z & 1
\end{pmatrix} \begin{pmatrix} z^{- 1/2} & 0 \\ 0 & z^{1/2}
\end{pmatrix}\\
& = \begin{pmatrix} - (1 - \varepsilon^2)^{1/2} e^{2 \pi i k
\theta} z & 1 \\ z & - (1 - \varepsilon^2)^{1/2} e^{- 2 \pi i k
\theta} \end{pmatrix} \begin{pmatrix} z^{- 1/2} & 0 \\ 0 & z^{1/2}
\end{pmatrix} \\
& = \begin{pmatrix} - (1 - \varepsilon^2)^{1/2} e^{2 \pi i k
\theta} z^{1/2} & z^{1/2} \\ z^{1/2} & - (1 - \varepsilon^2)^{1/2}
e^{- 2 \pi i k \theta} z^{1/2} \end{pmatrix},
\end{align*}
while for $j = 1$, we have
\begin{align*}
\varepsilon C^z(\theta,j) & A^z(\theta,j) C^z(\theta,j - 1)^{-1} = \\
& = \begin{pmatrix} z^{1/2} & 0 \\ 0 & z^{- 1/2}
\end{pmatrix} \begin{pmatrix} z & - (1 - \varepsilon^2)^{1/2} e^{2 \pi i
k \theta} \\ - (1 - \varepsilon^2)^{1/2} e^{- 2 \pi i k \theta} z
& 1
\end{pmatrix} \begin{pmatrix} 0 & 1 \\ 1 & 0 \end{pmatrix}\\
& = \begin{pmatrix} z^{1/2} & 0 \\ 0 & z^{- 1/2}
\end{pmatrix} \begin{pmatrix} - (1 - \varepsilon^2)^{1/2} e^{2 \pi i k
\theta} & z \\ 1 & - (1 - \varepsilon^2)^{1/2} e^{- 2 \pi i k
\theta} z \end{pmatrix}  \\
& = \begin{pmatrix} - (1 - \varepsilon^2)^{1/2} e^{2 \pi i k
\theta} z^{1/2} & z^{3/2} \\ z^{- 1/2} & - (1 -
\varepsilon^2)^{1/2} e^{- 2 \pi i k \theta} z^{1/2} \end{pmatrix},
\end{align*}
Thus,
\begin{equation}\label{conj}
C^z(\theta,j) A^z(\theta,j) C^z(\theta,j - 1)^{-1} =
\frac{z^{1/2}}{\varepsilon}
\begin{pmatrix} - (1 - \varepsilon^2)^{1/2} e^{2 \pi i
k \theta} & z^{j} \\ z^{-j} & - (1 - \varepsilon^2)^{1/2} e^{- 2
\pi i k \theta} \end{pmatrix}.
\end{equation}
We have $A^z_n(\theta,j) = A^z( \theta + (n-1) \alpha , j + n - 1)
\cdots A^z(\theta,j)$, which, by \eqref{conj}, is equal to
$$
C^z(\theta,j + n - 1)^{-1} \prod_{m = n-1}^0 \left(
\frac{z^{1/2}}{\varepsilon}
\begin{pmatrix} - (1 - \varepsilon^2)^{1/2} e^{2 \pi i
k (\theta + m \alpha)} & z^{(j+m \!\!\!\! \mod 2)} \\ z^{-(j + m
 \!\!\!\! \mod 2)} & - (1 - \varepsilon^2)^{1/2} e^{- 2 \pi i k (\theta + m
\alpha)}
\end{pmatrix} \right) C^z(\theta,j - 1).
$$
Since $C^z(\theta,j)$ is always unitary and $w = e^{2 \pi i
\theta}$ and $z^{1/2}$ both have modulus one, we find that
\begin{align*}
\| A^z_n(\theta,j) \| & = \varepsilon^{-n}\left\| \prod_{m = n-1}^0
\begin{pmatrix} -(1 - \varepsilon^2)^{1/2} e^{2 \pi i k (\theta + m
\alpha)} & z^{(j+m  \!\!\!\! \mod 2)} \\ z^{-(j+m \!\!\!\! \mod
2)} & -(1 - \varepsilon^2)^{1/2}
e^{- 2 \pi i k (\theta + m \alpha)} \end{pmatrix} \right\| \\
& = \varepsilon^{-n}\left\| \prod_{m = n-1}^0 \begin{pmatrix} -(1
- \varepsilon^2)^{1/2} e^{2 \pi i k (2\theta + m \alpha)} &
z^{(j+m \!\!\!\! \mod 2)} e^{2 \pi i k \theta} \\ z^{-(j+m
\!\!\!\! \mod 2)} e^{2 \pi i k \theta} & -(1 -
\varepsilon^2)^{1/2} e^{- 2 \pi
i k m \alpha} \end{pmatrix} \right\| \\
& = \varepsilon^{-n} \left\| \prod_{m = n-1}^0 \begin{pmatrix} -(1
- \varepsilon^2)^{1/2} e^{2 \pi i k m \alpha} w^{2k} & z^{(j+m \!\!\!\! \mod 2)} w^k \\
z^{-(j+m \!\!\!\! \mod 2)} w^k & -(1 - \varepsilon^2)^{1/2} e^{- 2
\pi i k m \alpha}
\end{pmatrix} \right\|.
\end{align*}
The $w$-dependence of the matrix in the last expression is
analytic and hence the $\log$ of its norm is subharmonic.
Therefore,
\begin{align*}
\int_\Omega \log \| A^z_n(\theta,j) \| \, d\mu(\theta,j) & = \frac12
\int_\T \log \| A^z_n(\theta,0) \| \, d\theta + \frac12
\int_\T \log \| A^z_n(\theta,1) \| \, d\theta \\
& \ge n \log \frac{(1 -
\varepsilon^2)^{\frac{1}{2}}}{\varepsilon}.
\end{align*}
Since
$$
\gamma(z) = \lim_{n \to \infty} \frac{1}{n} \int_\Omega \log \|
A^z_n(\theta,j)\| \, d\mu(\theta,j),
$$
the result follows.
\end{proof}

In Theorem~\ref{t.1} we considered functions given by simple
exponentials. Since we obtained explicit terms which bound the
Lyapunov exponents uniformly from below, it is possible to add
small perturbations to the function and retain uniform positivity
of the Lyapunov exponents. For example, given an integer $k \ge 1$
and $\lambda, a_{-k} , \ldots, a_{k-1} \in \C$, we set
\begin{equation}\label{fpertdef}
f_\lambda(\theta,j) = \begin{cases} (1 - \varepsilon^2)^{1/2}
\left( e^{2 \pi i k \theta} + \lambda \sum_{l = - k}^{k-1} a_l
e^{2 \pi i l \theta} \right) & j = 0, \\
(1 - \varepsilon^2)^{1/2} \left( e^{- 2 \pi i k \theta} + \lambda
\sum_{l = - k}^{k-1} a_l e^{- 2 \pi i l \theta} \right) & j = 1.
\end{cases}
\end{equation}
Since we need $f_\lambda$ to take values in $\D$, we have to
impose an upper bound on the admissible values of $\lambda$.
Clearly, once $\varepsilon \in (0,1)$, $k \ge 1$ and the numbers
$a_l \in \C$ are chosen, there is $\lambda_0 > 0$ such that for
$\lambda$ with modulus bounded by $\lambda_0$, the range of
$f_\lambda$ is contained in $\D$.

\begin{theorem}\label{t.2}
Let $(\Omega,\mu,T)$ be as above. For every $\varepsilon \in
(0,\frac{1}{\sqrt{2}})$, $k \in \Z_+$, and $\{ a_l \}_{l =
-k}^{k-1} \subset \C$, there is $\lambda_1 > 0$ such that for
every $\lambda$ with $|\lambda| < \lambda_1$, there is $\gamma_-
> 0$ for which the Lyapunov exponent $\gamma(\cdot)$ associated with $f_\lambda$
given by \eqref{fdef} satisfies
$$
\inf_{z \in \partial \D} \gamma(z) \ge \gamma_- .
$$
\end{theorem}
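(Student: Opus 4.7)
The plan is to follow the proof of Theorem~\ref{t.1} almost verbatim, observing that Herman's subharmonicity argument is stable under the polynomial perturbation in $f_\lambda$. Setting $w = e^{2\pi i\theta}$ and applying the same conjugation by $C^z$ as in the proof of Theorem~\ref{t.1}, the transformed matrices $\tilde M_m$ retain the same shape, with $f_\lambda$ (or its complex conjugate) now appearing in place of the pure exponential. Multiplying each factor by $w^k$ inside the product clears the negative powers of $w$ and renders the whole product a polynomial in $w$ on the closed unit disk, so the logarithm of its norm is a subharmonic function of $w$.

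The key observation is that the perturbation frequencies $l \in \{-k, \ldots, k-1\}$ all lie strictly below $k$. After multiplying by $w^k$, the perturbation $\lambda \sum_l a_l e^{2\pi i l m \alpha} w^l$ becomes a polynomial $\lambda \sum_l a_l e^{2\pi i l m \alpha} w^{l+k}$ in nonnegative powers of $w$, while its complex conjugate (which appears in the $(2,2)$ entry) becomes $\bar\lambda \sum_l \bar a_l e^{-2\pi i l m\alpha} w^{k-l}$ with $k - l \ge 1$ throughout. Hence the $(2,2)$ entry of $w^k \tilde M_m$ at $w = 0$ is identical to its value in the unperturbed problem, namely a scalar of modulus $(1-\varepsilon^2)^{1/2}/\varepsilon$; the $(1,1)$ entry at $w = 0$ picks up an additive contribution of modulus $(1-\varepsilon^2)^{1/2}|\lambda a_{-k}|/\varepsilon$; and the off-diagonal entries vanish.

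The product of the $n$ resulting diagonal matrices at $w = 0$ is diagonal, and provided $|\lambda a_{-k}| \le 1$ its operator norm equals $((1-\varepsilon^2)^{1/2}/\varepsilon)^n$ (dominated by the $(2,2)$ product). Feeding this into the subharmonicity bound $\int \log \|M(e^{2\pi i\theta})\|\, d\theta \ge \log \|M(0)\|$ reproduces the estimate
$$
\int_\Omega \log \|A^z_n(\theta, j)\|\, d\mu(\theta, j) \ge n \log \frac{(1-\varepsilon^2)^{1/2}}{\varepsilon}
$$
from the proof of Theorem~\ref{t.1}, valid for every $z \in \partial\D$. One therefore chooses $\lambda_1 := \min\{\lambda_0, 1/|a_{-k}|\}$ (with the convention $1/0 = \infty$ if $a_{-k} = 0$) and $\gamma_- := \log((1-\varepsilon^2)^{1/2}/\varepsilon)$, which is positive by the assumption $\varepsilon < 1/\sqrt{2}$.

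I do not expect any serious technical obstacle in executing this plan; the whole argument reduces to the index-counting observation that $\{-k, \ldots, k-1\}$ is precisely the range of frequencies that leaves the $(2,2)$ entry of $w^k\tilde M_m$ at $w = 0$ unperturbed. If the range were enlarged to include $l = k$, one would have instead to show that the $(2,2)$ entry, which would then be a scalar of magnitude $(1-\varepsilon^2)^{1/2}|1 + \bar\lambda \bar a_k|/\varepsilon$, remains bounded below by a positive quantity; this is still true for small $\lambda$, but under the chosen range even this mild complication is absent.
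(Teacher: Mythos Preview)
Your approach is the same as the paper's and you correctly isolate the crucial point, namely that the restriction $l\in\{-k,\ldots,k-1\}$ makes the $(2,2)$ entry of the $w^{k}$-rescaled factor vanish to order at least one at $w=0$ except for the unperturbed term, so that Herman's subharmonicity step goes through verbatim. In fact your discussion is more explicit than the paper's, which simply remarks that ``the degree requirements for the perturbation are such that the subharmonicity argument from the proof of Theorem~\ref{t.1} goes through without any changes.''

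There is, however, one genuine oversight. In the perturbed setting $|f_\lambda(\omega)|$ is no longer constant, so the normalising prefactor $(1-|f_\lambda(\omega)|^{2})^{-1/2}$ in the Szeg\H{o} cocycle \eqref{sc} is no longer equal to $\varepsilon^{-1}$, and more importantly it is \emph{not holomorphic in $w$}. You cannot bundle it into $\tilde M_m$ and then evaluate at $w=0$; it has to be pulled out of the product before subharmonicity is invoked. Its contribution to the Lyapunov exponent is the deterministic term $-\tfrac12\int_\Omega\log\bigl(1-|f_\lambda|^{2}\bigr)\,d\mu$, which equals $-\log\varepsilon$ only at $\lambda=0$. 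This is exactly why the paper says the lower bound ``changes continuously once the perturbation is turned on'' rather than claiming the same constant $\log\bigl((1-\varepsilon^{2})^{1/2}/\varepsilon\bigr)$ persists. Your argument as written does not justify the value of $\gamma_-$ you state; to repair it, either (i) argue by continuity in $\lambda$ of this integral term, as the paper does, or (ii) observe via Jensen that $\int\log(1-|f_\lambda|^{2})\,d\mu\le\log\int(1-|f_\lambda|^{2})\,d\mu\le\log\varepsilon^{2}$, which in fact recovers your stated $\gamma_-$ and even improves on the paper's conclusion.
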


\begin{proof}
The smallness condition $|\lambda| < \lambda_1$ needs to address
two issues. First, the range of the function $f_\lambda$ must be
contained in $\D$, so we need $\lambda_1 \le \lambda_0$. Second,
the explicit strictly positive uniform lower bound obtained in the
proof of Theorem~\ref{t.1} for the case $\lambda = 0$ changes
continuously once the perturbation is turned on. Thus, it remains
strictly positive for $|\lambda|$ small enough. Notice that the
degree requirements for the perturbation in \eqref{fpertdef} are
such that the subharmonicity argument from the proof of
Theorem~\ref{t.1} goes through without any changes.
\end{proof}

\section{Discussion}

In the previous section we proved a uniform lower bound for the
Lyapunov exponents associated with strongly coupled almost
periodic sequences of Verblunsky coefficients. A few remarks are
in order.

The Verblunsky coefficients take values in the open unit disk and
the unit circle has to be regarded as the analogue of infinity in
the Schr\"odinger case. Thus, just as the coupling constant is
sent to infinity in the application of Herman's argument in the
Schr\"odinger case, the coupling constant is sent to one in our
study. Notice that we need a rather uniform convergence to the
unit circle, whereas one may have zeros in the Schr\"odinger case.
In particular, while Herman's argument applies to all non-constant
trigonometric polynomials in the Schr\"odinger case, we only treat
small perturbations of simple exponentials.

Another limitation of our proof is that it requires the
consideration of the product $\T \times \Z_2$. It would be nicer
to have genuine quasi-periodic examples, that is, generated by
minimal translations on a finite-dimensional torus. Our attempts
to produce such examples have run into trouble with analyticity
issues. It would be of interest to produce quasi-periodic examples
or to demonstrate why Herman's argument cannot work for any of
them.

As explained by Simon in \cite[Theorem~12.6.1]{S2}, as soon as one
knows that $\gamma(z)$ is positive for (Lebesgue almost) every $z
\in \D$, one can immediately deduce that for $\mu$-almost all
elements of $\Omega$, Lebesgue almost all Aleksandrov measures
associated with the sequence of Verblunsky coefficients in
question are pure point. This is applicable to our examples for
$\varepsilon \in (0,\frac{1}{\sqrt{2}})$ and $|\lambda|$ small
enough.

\end{document}